
\documentclass[preprint,12pt,authoryear]{elsarticle}




\usepackage{amssymb}
\usepackage{amssymb}
\usepackage{epstopdf}
\DeclareGraphicsRule{.tif}{png}{.png}{`convert #1 `dirname #1`/`basename #1 .tif`.png}
 \usepackage{a4wide}
  \usepackage{amsmath}
  \usepackage{amssymb}
  \usepackage{latexsym}
  \usepackage{natbib}
  \usepackage{hyperref}
 \usepackage{amsthm}


\newcommand{\E}{\mathbb E}
\newcommand{\Prob}{\mathbb{P}}



\newcommand{\Ical}[0]{\ensuremath{{\mathcal I}}}

\newcommand{\Ncal}[0]{\ensuremath{{\mathcal N}}}





\newcommand{\dtv}{d_{\text{TV}}}

\newtheorem{theorem}{Theorem}[section]
\newtheorem{lemma}[theorem]{Lemma}

\newtheorem{remark}[theorem]{Remark}

\journal{Statistics and Probability Letters}

\begin{document}

\begin{frontmatter}



\title{On the recent-$k$-record of discrete random variables}


\author{Anshui Li}

\address{School of Mathematics, Physics and Information, Shaoxing University, Shaoxing, 312000, P.R.China} 
\ead{anshuili@usx.edu.cn}

\begin{abstract}
Let $X_1,~X_2,\cdots$ be a sequence of i.i.d random variables which are supposed to be observed in sequence. The $n$th value in the sequence is a $k-record~value$ if exactly $k$ of the first $n$ values (including $X_n$) are at least as large as it. Let ${\bf R}_k$ denote the ordered set of $k$-record values. The famous Ignatov's Theorem states that the random sets ${\bf R}_k(k=1,2,\cdots)$ are independent with common distribution. We introduce one new record named $recent-k-record$ (RkR in short) in this paper:  $X_n$ is a $j$-RkR if there are exactly $j$ values at least as large as $X_n$ in $X_{n-k},~X_{n-k+1},\cdots,~X_{n-1}$. It turns out that RkR brings many interesting problems and some novel properties such as prediction rule and Poisson approximation  which are proved in this paper. One application named "No Good Record" via the Lov{\'a}sz Local Lemma is also provided. We conclude this paper with some possible connection with scan statistics.
\end{abstract}



\begin{keyword}
Ignatov's Theorem \sep Recent-$k$-record\sep Poisson Approximation\sep the Lov{\'a}sz Local Lemma \sep Scan Statistics


\MSC[2000] 62H10 \sep 62B10 

\end{keyword}

\end{frontmatter}


\section{Introduction}
Let $X_1,~X_2,\cdots$ be a sequence of  i.i.d random variable following the common probability mass function $\Prob(X=j)=p_j,~j\in \mathbb{Z}^+$. Suppose that these random variables are observed one by one , and $X_n$ is called a $k-record~value$ if
\[
|\{i\in \{1,2,\cdots, n\}: X_i\ge X_n\}|=k.
\] In other words, $X_n$ is one value with exactly $k$ values (including itself) as large as it in the sequence $X_1,~X_2,\cdots,X_n$. For fixed $k$,  a random ordered set ${\bf R}_k$ which includes all the $k$-record values in the sequence can be defined. In fact, the set 
\[
{\bf R}_1=\{R_1,~R_2,~R_3,\cdots\}
\]  can be regarded as the observation values  that are the largest yet seen when they appear, and one can think about the set ${\bf R}_2$ of observation values that are the {\em second} largest on their appearance, and so on.

The famous result which is called Ignatov's theorem states that not only do the sequences of $k$-record values share the same probability distribution for all $k$, but also these sequences are independent of each other. One can easily identify the ${\bf R}_k$ for given sequence observed via one technique used in the proof of the famous Ignatov's Theorem by defining a series of subsequence of the data sequence $X_1,~X_2,\cdots$, for example, see~\cite{2014ross}.  Later there are many variants and developments related to this topic, see~\cite{2002Best, 1988Ignatov, Pekoz1999Ignatov,1997On, simons2002doob, bunge2001record, aldous1999longest}.

In this paper, we will introduce one novel random
variable called recent-k-record (RkR in short) for some fixed integer $k\ge 1$: instead of considering the whole past story, we only consider the values of $X_{n-k},~X_{n-k+1},\cdots,~X_{n-1}$, i.e., the $k$ values before $X_n$(do not include itself). And let us define $X_n$ be a $j$-RkR if there are exactly $j$ values at least as large as $X_n$ in $X_{n-k},~X_{n-k+1},\cdots,~X_{n-1}$. In other words,  $X_n$ a $j$-RkR if
\[
|p: X_{n-p}\ge X_n, 1\le p\le k|=j.
\]  We will denote that $i \in R_j^k$ if $i$ is a $j$-RkR. In other words, there exists a subsequence with length $k+1$ such that $X_{n_0}=i$ and
\[
|p: X_{n_0-p}\ge i, 1\le p\le k|=j.
\] for some $n_0\ge k+1$. We can consider the usual $j$-record as one "dynamic version" of $j$-RkR, i.e., $k=n$ for $X_n$ in that case.

Actually, RkR can be found applications in many areas: for example, to assess one athlete's recent condition and achievements, one proper way is to  check the results in his recent records and not necessary to get the whole story( it may be nothing with his records ten years or even five years before).
For the k-records application in statistics for athletes, see~\cite{1997statistics}.

The remainder of this paper is organized as follows. In Section~\ref{sec:2}, we calculate the prediction probability for RkR. The Poisson approximation for RkR and one interesting application via the Lov{\'a}sz Local Lemma will be presented in Section~\ref{sec:3} and Section~\ref{sec:4} respectively. We conclude this paper with some possible extensions of RkR related to scan statistics in Section~\ref{sec:5}.
\section{Prediction probability for RkR}\label{sec:2}

Let us fix $k\ge 1$ and $ 0\le j\le k, n\ge k+1$ in the whole paper, denote $S_i=\Prob(X\ge i)=\sum_{s\ge i}p_s$ and $C_i=\Prob(X\le i)=\sum_{j=1}^ip_j$. It is obvious that $S_i+C_{i-1}=1$.
\begin{theorem} It is easy to have the following observations
\begin{enumerate}
\item $\Prob(i \in R_j^k~in~(X_1,X_2,\cdots,X_{k+1}))=\binom{k}{j}S_i^j(C_{i-1})^{k-j}p_i$;
\item $\Prob(i \in R_{j+1}^k ~in~(X_2,X_3,\cdots,X_{k+2})|i \in R_j^k~in~(X_1,X_2,\cdots,X_{k+1}))=(C_{i-1})p_i$;
\item $\Prob(i \in R_{j}^k ~in~(X_2,X_3,\cdots,X_{k+2})|i \in R_j^k~in~(X_1,X_2,\cdots,X_{k+1}))=S_ip_i$;
\item $\Prob(i\notin R_j^k~ \text{in}~ (X_1,X_2,\cdots,X_{k+1} ))=\left(1-\binom{k}{j}S_i^j(C_{i-1})^{k-j}\right)p_i+(1-p_i)$;

\item The probability $A$ that $(i \in R_j^k)$ is in the first n-sequence is at most 
\[
\Prob(A)\le (n-k)\binom{k}{j}S_i^j(C_{i-1})^{k-j}p_i.
\]
 
\end{enumerate}
\end{theorem}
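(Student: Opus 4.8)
The plan is to derive all five parts directly from the i.i.d.\ structure, using the fact that in a window of length $k+1$ starting at index $1$ the only possible ``current'' value is $X_{k+1}$, so that $i\in R_j^k$ in $(X_1,\dots,X_{k+1})$ means exactly $X_{k+1}=i$ together with exactly $j$ of the preceding variables $X_1,\dots,X_k$ being $\ge i$. For part (i) I would condition on $X_{k+1}=i$, an event of probability $p_i$ that is independent of $X_1,\dots,X_k$; conditionally each $X_p$ ($1\le p\le k$) is $\ge i$ with probability $S_i$ and $\le i-1$ with probability $C_{i-1}$ independently, so the number that are at least $i$ is $\Bin(k,S_i)$ and the event ``exactly $j$'' carries weight $\binom{k}{j}S_i^{\,j}C_{i-1}^{\,k-j}$. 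Multiplying gives (i). Part (iv) then follows by complementation: $\{i\notin R_j^k\}$ is the disjoint union of $\{X_{k+1}\ne i\}$, of probability $1-p_i$, and $\{X_{k+1}=i\}$ intersected with ``not exactly $j$ of the first $k$ exceed $i$'', of probability $p_i\bigl(1-\binom{k}{j}S_i^{\,j}C_{i-1}^{\,k-j}\bigr)$; summing recovers the stated expression.

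Parts (ii) and (iii) I would treat together through the one-step slide from the window $(X_1,\dots,X_{k+1})$ to $(X_2,\dots,X_{k+2})$. Conditioning on the first-window event fixes $X_{k+1}=i$ and forces exactly $j$ of $X_1,\dots,X_k$ to be $\ge i$. The new current value must again equal $i$, contributing an independent factor $p_i$, while the new block of $k$ predecessors $X_2,\dots,X_{k+1}$ is obtained from the old block $X_1,\dots,X_k$ by deleting $X_1$ and inserting $X_{k+1}=i$. The inserted value equals $i$ and hence always counts, so the whole transition is controlled by the deleted value $X_1$: if $X_1\le i-1$ the count climbs from $j$ to $j+1$, which is the $(j+1)$-RkR of (ii), and if $X_1\ge i$ it remains $j$, which is the $j$-RkR of (iii). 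It remains only to weight these two alternatives correctly under the conditioning.

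The main obstacle is exactly this last weighting, because conditioning on ``exactly $j$ of $X_1,\dots,X_k$ exceed $i$'' is not independent of the single event $\{X_1\ge i\}$, so I must be careful about whether the governing factors are the marginals $C_{i-1}$ and $S_i$ or the exchangeability ratios $(k-j)/k$ and $j/k$. I would settle this cleanly by computing the joint probability of the two windows directly from the product law --- fixing $X_{k+1}=i$, $X_{k+2}=i$, the status of $X_1$, and a $\binom{k-1}{j}$ or $\binom{k-1}{j-1}$ count over $X_2,\dots,X_k$ --- and then dividing by the first-window probability from (i); the resulting cancellation exposes the intended conditional factor and simultaneously confirms that (ii) and (iii) are complementary over $\{X_{k+2}=i\}$. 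Finally, part (v) needs no conditioning: the event that $i$ occurs as a $j$-RkR somewhere in $X_1,\dots,X_n$ is the union, over the $n-k$ admissible current positions $n_0\in\{k+1,\dots,n\}$, of translates of the first-window event, each of probability $\binom{k}{j}S_i^{\,j}C_{i-1}^{\,k-j}p_i$ by stationarity, and Boole's inequality immediately yields the bound $(n-k)\binom{k}{j}S_i^{\,j}C_{i-1}^{\,k-j}p_i$.
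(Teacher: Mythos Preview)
The paper offers no proof of this theorem at all---it simply labels the five items ``easy observations'' and moves on---so there is nothing to compare against at the level of argument. Your treatment of (i), (iv) and (v) is exactly the natural one and is correct.

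Your instinct about (ii) and (iii) is right, and in fact it is more than an obstacle: if you actually carry out the joint computation you propose, you do \emph{not} recover the stated formulas. With $A=\{X_{k+1}=i,\ \text{exactly }j\text{ of }X_1,\dots,X_k\text{ are }\ge i\}$ and $B=\{X_{k+2}=i,\ \text{exactly }j\text{ of }X_2,\dots,X_{k+1}\text{ are }\ge i\}$, the intersection forces $X_1\ge i$ and exactly $j-1$ of $X_2,\dots,X_k$ to be $\ge i$, so
\[
\Prob(A\cap B)=p_i^2\,S_i\binom{k-1}{j-1}S_i^{\,j-1}C_{i-1}^{\,k-j},
\qquad
\Prob(B\mid A)=\frac{\binom{k-1}{j-1}}{\binom{k}{j}}\,p_i=\frac{j}{k}\,p_i,
\]
not $S_ip_i$; the companion case gives $(k-j)p_i/k$ rather than $C_{i-1}p_i$. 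In other words, the correct weights are the exchangeability ratios $j/k$ and $(k-j)/k$ that you yourself flagged, and the paper's own proof of the later Prediction rule uses precisely those ratios. So the issue is not a gap in your plan but an error in the statement of (ii)--(iii); your proposed calculation is the right one and would expose it.
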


\begin{theorem}\label{thm:dis}
$\Prob(X_n\in R_j^k)= \sum_{i}\binom{k}{j}S_i^{j}(C_{i-1})^{k-j}p_i$, as a result, we have
\[
q_i=\Prob(X_n=i|X_n\in R_j^k)=\frac{\binom{k}{j}S_i^j(C_{i-1})^{k-j}p_i}{\sum_{l}\binom{k}{j}S_l^{j}(C_{l-1})^{k-j}p_l}
\]
\end{theorem}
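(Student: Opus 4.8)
The plan is to establish the marginal probability $\Prob(X_n\in R_j^k)$ by conditioning on the value taken by $X_n$ and applying the law of total probability, and then to read off the conditional mass function $q_i$ directly from the definition of conditional probability. Since the event $\{X_n\in R_j^k\}$ depends only on the window $(X_{n-k},\dots,X_n)$ and the sequence is i.i.d., shift invariance lets me reduce to a fixed window; in fact the joint probability I need is exactly item (1) of the preceding theorem, namely $\Prob(X_n=i,\,X_n\in R_j^k)=\binom{k}{j}S_i^{j}(C_{i-1})^{k-j}p_i$, and summing this over $i$ will produce the asserted expression.

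To justify that joint probability from first principles (rather than merely invoking item (1)), I would fix $X_n=i$ and use the fact that $X_{n-k},\dots,X_{n-1}$ are independent of $X_n$ and mutually i.i.d.: each of the $k$ preceding observations independently satisfies $X_{n-p}\ge i$ with probability $S_i=\Prob(X\ge i)$ and $X_{n-p}\le i-1$ with probability $C_{i-1}=\Prob(X\le i-1)$, where $S_i+C_{i-1}=1$. Hence, conditionally on $X_n=i$, the number of preceding values at least as large as $X_n$ is $\Bin(k,S_i)$, so the probability of exactly $j$ such values is $\binom{k}{j}S_i^{j}(C_{i-1})^{k-j}$. Multiplying by $\Prob(X_n=i)=p_i$ and summing over $i$ gives
\[
\Prob(X_n\in R_j^k)=\sum_i \binom{k}{j}S_i^{j}(C_{i-1})^{k-j}p_i.
\]

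The conditional mass function then follows immediately from the definition of conditional probability,
\[
q_i=\Prob(X_n=i\mid X_n\in R_j^k)=\frac{\Prob(X_n=i,\,X_n\in R_j^k)}{\Prob(X_n\in R_j^k)}=\frac{\binom{k}{j}S_i^{j}(C_{i-1})^{k-j}p_i}{\sum_{l}\binom{k}{j}S_l^{j}(C_{l-1})^{k-j}p_l}.
\]
Once the independence is in place the computation is essentially mechanical, so I do not expect a genuine obstacle; the only point deserving care is the treatment of ties. Because ``at least as large'' is taken with $\ge$ and we condition on $X_n=i$ exactly, a preceding value equal to $i$ counts toward the tally, which is precisely why the threshold split is $S_i$ versus $C_{i-1}$ rather than $\Prob(X>i)$ versus $\Prob(X\le i)$; this is what keeps the binomial bookkeeping consistent for discrete variables with atoms.
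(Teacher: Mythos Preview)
Your argument is correct and matches the paper's own proof: condition on the value of $X_n$, use independence of the i.i.d.\ preceding observations to obtain the binomial factor $\binom{k}{j}S_i^{j}(C_{i-1})^{k-j}$, sum over $i$, and then apply Bayes' rule for $q_i$. The paper is terser, but the method is identical.
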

\begin{proof} The result is easy to get by conditioning on $X_n$,
\begin{equation}
\begin{aligned}
\Prob(X_n \in R_j^k) &=\sum_{l}\Prob(X_n\in R_j^k| X_n=l)\Prob(X_n=l)\\
                              &=\sum_l\binom{k}{j}S_l^j(C_{l-1})^{k-j}p_l\\
                              &=\binom{k}{j}\sum_iS_l^j(C_{l-1})^{k-j}p_l
\end{aligned}
\end{equation}
And the second result is obtained by Bayes' rule.
\end{proof}
From Theorem~\ref{thm:dis}, we can assert that the $k$ random variables 
\[
R_1^k, R_2^k,\cdots,R_k^k
\]
donot have the same distribution and of course they are not independent either. In other words, our result here is completely different with the famous Ignatov's Theorem.

In the following result, we will predict $X_{n+1}$ based on the states of $X_n$ for $n\ge k+1$.
\begin{theorem}[Prediction rule]
$$\Prob(X_{n+1}\in R_j^k| X_n\in R_j^k)=\sum_{i}q_i\left(S_ip_i+p_m\left(\sum_{m>i}\left(\frac{S_m}{S_i}\right)^j\frac{k-j}{k}+\sum_{m<i}\left(\frac{C_{m-1}}{C_{i-1}}\right)^{k-j}\frac{j}{k}\right)\right).$$
\end{theorem}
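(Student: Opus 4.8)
The plan is to compute the conditional probability by first conditioning on the value of $X_n$. By Theorem~\ref{thm:dis}, given the event $\{X_n\in R_j^k\}$ the value $X_n$ equals $i$ with probability $q_i$, so I would write
\[
\Prob\bigl(X_{n+1}\in R_j^k \mid X_n\in R_j^k\bigr)=\sum_i q_i\,\Prob\bigl(X_{n+1}\in R_j^k \mid X_n=i,\ X_n\in R_j^k\bigr),
\]
and reduce everything to the inner conditional probability for a fixed reference value $i$. The key structural observation is that the two relevant windows overlap: the window governing $X_n$ is $(X_{n-k},\dots,X_{n-1})$, while the window governing $X_{n+1}$ is $(X_{n-k+1},\dots,X_{n-1},X_n)$, so passing from the first to the second merely drops the oldest entry $X_{n-k}$ and appends $X_n=i$.

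Conditioned on $\{X_n\in R_j^k\}$, exactly $j$ of the entries $X_{n-k},\dots,X_{n-1}$ are $\ge i$ (``big'') and $k-j$ are $<i$ (``small''); moreover, conditionally the big entries are i.i.d.\ from the law of $X$ restricted to $\{X\ge i\}$ and the small entries i.i.d.\ from $X$ restricted to $\{X<i\}$, independently. By exchangeability the dropped entry $X_{n-k}$ is big with probability $j/k$ and small with probability $(k-j)/k$. Hence, after the drop, the retained block $(X_{n-k+1},\dots,X_{n-1})$ together with $X_n=i$ consists of either $(j-1)$ big, $(k-j)$ small, plus $i$ (with probability $j/k$), or $j$ big, $(k-j-1)$ small, plus $i$ (with probability $(k-j)/k$).

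Next I would condition on $X_{n+1}=m$ (probability $p_m$, independent of the past) and count, in each of the two cases, how many of the $k$ entries of the $X_{n+1}$-window are $\ge m$, demanding that this count equal $j$. The natural split is $m>i$ versus $m<i$, with $m=i$ a boundary. When $m>i$ the appended $i$ and all small entries lie below $m$, so only big entries can contribute; reaching exactly $j$ is possible only in the branch carrying $j$ big entries, and it forces every one of them to exceed $m$, which under the conditional law on $\{X\ge i\}$ has probability $(S_m/S_i)^j$ and weight $(k-j)/k$. Symmetrically, when $m<i$ the big entries and the appended $i$ are all $\ge m$, so exactly $j$ forces every retained small entry below $m$, of conditional probability $(C_{m-1}/C_{i-1})^{k-j}$ and weight $j/k$. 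The boundary $m=i$ is then handled directly: only the branch with $(j-1)$ big retained entries yields exactly $j$, contributing $\tfrac{j}{k}p_i$. Summing over $m$ and then over $i$ against $q_i$ assembles the displayed formula.

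The main obstacle I anticipate is the bookkeeping of the \emph{retained} entries: conditioning on $\{X_n\in R_j^k\}$ records only whether each neighbour sits above or below $i$, yet to recount them against the shifted threshold $m$ I must invoke their full conditional laws on $\{X\ge i\}$ and $\{X<i\}$. The exchangeability step that fixes the status of the dropped entry $X_{n-k}$ (big vs.\ small with the precise weights $j/k$ and $(k-j)/k$) is what makes the overlap tractable, and the requirement of \emph{exactly} $j$ successes is exactly what collapses the binomial counting into the clean monomials $(S_m/S_i)^j$ and $(C_{m-1}/C_{i-1})^{k-j}$; identifying which of the two branches is infeasible in each regime, and pinning down the $m=i$ boundary term, is the delicate part of the argument.
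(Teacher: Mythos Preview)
Your decomposition is essentially the paper's: condition on $X_n=i$ (weight $q_i$), then split according to whether the dropped entry $X_{n-k}$ lies in $\{X\ge i\}$ or $\{X<i\}$, and finally split on $m>i$, $m<i$, $m=i$. The paper carries this out by writing joint probabilities and dividing by $\Prob(X_n=i\in R_j^k)$; your exchangeability phrasing is just a repackaging of the same ratios, since $\binom{k-1}{j-1}/\binom{k}{j}=j/k$ and $\binom{k-1}{j}/\binom{k}{j}=(k-j)/k$. For $m>i$ and $m<i$ your output matches the paper's computation line for line.

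The one substantive discrepancy is the boundary term $m=i$. You obtain $\tfrac{j}{k}\,p_i$, whereas the displayed statement (and the paper's proof, which here simply quotes the earlier observation $\Prob(i\in R_j^k\text{ in }(X_2,\dots,X_{k+2})\mid i\in R_j^k\text{ in }(X_1,\dots,X_{k+1}))=S_ip_i$) carries $S_ip_i$. Your value is in fact the correct one: conditionally on $\{X_n=i\in R_j^k\}$, the probability that $X_{n-k}\ge i$ is $j/k$, not the unconditional $S_i$, and the joint-probability computation
\[
\frac{S_i\binom{k-1}{j-1}S_i^{\,j-1}C_{i-1}^{\,k-j}p_i^2}{\binom{k}{j}S_i^{\,j}C_{i-1}^{\,k-j}p_i}=\frac{j}{k}\,p_i
\]
confirms it. So your argument is sound, but what it proves is the formula with $S_ip_i$ replaced by $\tfrac{j}{k}p_i$; it does not establish the statement exactly as printed, because that statement inherits the slip from the paper's Theorem~2.1(iii).
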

\begin{proof}
\begin{equation}\label{equ:con}
\begin{aligned}
\Prob(X_{n+1}\in R_j^k| X_n\in R_j^k)\\
&=\Prob(X_{n+1}\in R_j^k,X_n=X_{n+1}| X_n\in R_j^k)+\Prob(X_{n+1}\in R_j^k,X_n\neq X_{n+1}| X_n\in R_j^k)
\end{aligned}
\end{equation}
Then we use the following formula of conditional probability
\[
\Prob(A|B)=\sum_{C_i}\Prob(AC_i|B)=\sum_{C_i}\Prob(A|C_iB)\Prob(C_i|B)
\] in which $\{C_i\}_{i\ge 1}$ is a partition of the corresponding probability space $\Omega$.

Then the equation (\ref{equ:con}) can be written by letting $\Omega=\sum_{i}(X_n=i)$
\begin{equation}
\begin{aligned}
&=\sum_{i}\Prob(X_{n+1}\in R_j^k,X_n=X_{n+1},X_n=i| X_n\in R_j^k)+\sum_i\Prob(X_{n+1}\in R_j^k,X_n\neq X_{n+1},X_n=i| X_n\in R_j^k)\\
&=\sum_{i}\Prob(X_{n+1}\in R_j^k,X_n=X_{n+1}| X_n=i,X_n\in R_j^k)\Prob(X_n=i|X_n\in R_j^k)\\
&+\sum_i\Prob(X_{n+1}\in R_j^k,X_n\neq X_{n+1}| X_n=i,X_n\in R_j^k)\Prob(X_n=i|X_n\in R_j^k)\\
&=\sum_{i}\Prob(X_{n+1}=i\in  R_j^k|X_n=i\in R_j^k)q_i
+\sum_i\Prob(X_{n+1}\in R_j^k,X_n\neq X_{n+1}|X_n= i\in R_j^k)q_i\\
&=\sum_{i}\Prob(X_{n+1}=i\in  R_j^k|X_n=i\in R_j^k)q_i+\sum_i\sum_{m\neq i}\Prob(X_{n+1}=m\in R_j^k|X_n= i\in R_j^k)q_i
\end{aligned}
\end{equation}
Actually, the first part of the formula above is easy and we have 
\begin{equation}
\begin{aligned}
\sum_{i}\Prob(X_{n+1}=i\in  R_j^k|X_n=i\in R_j^k)\Prob(X_n=i|X_n\in R_j^k)&=
\sum_{i}S_ip_iq_i.
\end{aligned}
\end{equation}
Then we will analyze the second part in several steps as follows:
\begin{equation}
\begin{aligned}
\sum_{m\neq i}\Prob(X_{n+1}=m\in R_j^k|X_n= i\in R_j^k)
&=\sum_{m>i}\Prob(X_{n+1}=m\in R_j^k|X_n= i\in R_j^k)\\
&+\sum_{m<i}\Prob(X_{n+1}=m\in R_j^k|X_n= i\in R_j^k)
\end{aligned}
\end{equation}
Then we discuss the two different cases accordingly:
\begin{enumerate}
\item For the case $m>i$: we have 
\begin{equation}
\begin{aligned}
\sum_{m>i}\Prob(X_{n+1}=m\in R_j^k|X_n= i\in R_j^k)
&=\sum_{m>i}\Prob(X_{n+1}=m\in R_j^k,X_{n-k}<i|X_n= i\in R_j^k)\\
&+\sum_{m>i}\Prob(X_{n+1}=m\in R_j^k,X_{n-k}\ge i|X_n= i\in R_j^k)
\end{aligned}
\end{equation}

\begin{enumerate}
\item Conditioning on the event $\{X_{n-k}< i\}$: the event $\{X_n=i\in R_j^k\}$ indicates that there are exactly $j$ elements which are at least as large as $X_n=i$ in $X_{n-k+1},X_{n-k+2},\cdots,X_{n-1}$;
 the event $\{X_{n+1}=m\in R_j^k\}$ indicates that there are exactly $j$ elements which are at least as large as $X_{n+1}=m$ in $X_{n-k+1},X_{n-k+2},\cdots,X_{n-1}$. To sum up: the event $(X_{n+1}=m\in R_j^k,X_{n-k}<i,X_n= i\in R_j^k)$ means: there are  $j$ elements which are at least as large as $X_{n+1}=m$ and $k-1-j$ elements which are strictly less than $i$ in $X_{n-k+1},X_{n-k+2},\cdots,X_{n-1}$, and $X_{n-k}<i, X_n=i, X_{n+1}=m$. I.e.,
\begin{equation}
\begin{aligned}
 \Prob(X_{n+1}=m\in R_j^k, X_{n-k}<i|X_n= i\in R_j^k)
 &=\frac{\Prob(X_{n+1}=m\in R_j^k, X_{n-k}<i,X_n= i\in R_j^k)}{\Prob(X_n= i\in R_j^k)}\\
 &=\frac{\binom{k-1}{j}S_m^j(C_{i-1})^{k-1-j}C_{i-1}p_ip_m}{\binom{k}{j}S_i^j(C_{i-1})^{k-j}p_i}\\
 &=\left(\frac{S_m}{S_i}\right)^j\left(\frac{k-j}{k}\right)p_m
\end{aligned}
\end{equation}
\item Conditioning on the event $\{X_{n-k}\ge i\}$: the event $\{X_n=i\in R_j^k\}$ indicates that there are exactly $j-1$ elements which are at least as large as $X_n=i$ in $X_{n-k+1},X_{n-k+2},\cdots,X_{n-1}$;
 the event $\{X_{n+1}=m\in R_j^k\}$ and $X_n=i<m$ indicates that there are exactly $j$ elements which are at least as large as $X_{n+1}=m>i$ in $X_{n-k+1},X_{n-k+2},\cdots,X_{n-1}$. This is not possible, so the probability is zero.
 \end{enumerate}
\item For the case $m<i$:we have 
\begin{equation}
\begin{aligned}
\sum_{m<i}\Prob(X_{n+1}=m\in R_j^k|X_n= i\in R_j^k)
&=\sum_{m<i}\Prob(X_{n+1}=m\in R_j^k,X_{n-k}<i|X_n= i\in R_j^k)\\
&+\sum_{m<i}\Prob(X_{n+1}=m\in R_j^k,X_{n-k}\ge i|X_n= i\in R_j^k)
\end{aligned}
\end{equation}

\begin{enumerate}
\item  Conditioning on the event $\{X_{n-k}<i\}$: the event $\{X_n=i\in R_j^k\}$ indicates that there are exactly $j$ elements which are at least as large as $X_n=i$ in $X_{n-k+1},X_{n-k+2},\cdots,X_{n-1}$, which means there will be $j+1$ elements which are as large as $m$ in $X_{n-k+1},X_{n-k+2},\cdots,X_n$, contradicting the event $\{X_{n+1}=m\in R_j^k\}$.
\item Conditioning on the event $\{X_{n-k}\ge i\}$: the event $\{X_n=i\in R_j^k\}$ indicates that there are exactly $j-1$ elements which are at least as large as $X_n=i$ in $X_{n-k+1},X_{n-k+2},\cdots,X_{n-1}$;
 the event $\{X_{n+1}=m\in R_j^k\}$ indicates that there are exactly $j-1$ elements which are at least as large as $X_{n+1}=m$ in $X_{n-k+1},X_{n-k+2},\cdots,X_{n-1}$. To sum up: the event $(X_{n+1}=m\in R_j^k,X_{n-k}<i,X_n= i\in R_j^k)$ means: there are  $j-1$ elements which are at least as large as $X_{n}=i>m$ and $k-1-j$ elements which are strictly less than $m$ in $X_{n-k+1},X_{n-k+2},\cdots,X_{n-1}$, and $X_{n-k}\ge i, X_n=i, X_{n+1}=m$. I.e.,
 \begin{equation}
\begin{aligned}
 \Prob(X_{n+1}=m\in R_j^k, X_{n-k}\ge i|X_n= i\in R_j^k)
 &=\frac{\Prob(X_{n+1}=m\in R_j^k, X_{n-k}\ge i,X_n= i\in R_j^k)}{\Prob(X_n= i\in R_j^k)}\\
 &=\frac{\binom{k-1}{j-1}S_i^{j-1}(C_{m-1})^{k-j}S_ip_ip_m}{\binom{k}{j}S_i^j(C_{i-1})^{k-j}p_i}\\
 &=\left(\frac{C_{m-1}}{C_{i-1}}\right)^{k-j}\left(\frac{j}{k}\right)p_m
\end{aligned}
\end{equation}

\end{enumerate}
\end{enumerate}
Finally, we put all the pieces together, we can have 
\begin{equation}
\begin{aligned}
\Prob(X_{n+1}\in R_j^k| X_n\in R_j^k)
&=\sum_{i}\Prob(X_{n+1}\in R_j^k,X_n=X_{n+1},X_n=i| X_n\in R_j^k)\\
&+\sum_i\Prob(X_{n+1}\in R_j^k,X_n\neq X_{n+1},X_n=i| X_n\in R_j^k)\\
&=\sum_{i}q_i\left(S_ip_i+p_m\left(\sum_{m>i}\left(\frac{S_m}{S_i}\right)^j\frac{k-j}{k}+\sum_{m<i}\left(\frac{C_{m-1}}{C_{i-1}}\right)^{k-j}\frac{j}{k}\right)\right).
\end{aligned}
\end{equation}
\end{proof}
\begin{remark}
Actually, we can get the more general case of the prediction probability
\[
\Prob(X_{n+1}\in R_{j_1}^k| X_n\in R_{j_2}^k)
\] for  $j_1\neq j_2$ by the same argument above.
\end{remark}
\section{Poisson approximation for $R_j^k$}\label{sec:3}
The asymptotic properties of sum of random variables are very important in probability and statistics. It is well known that convergence to a Poisson distribution can occur if the individual  means of Bernoulli random variable are all small even if they are not independent, more detained information can be found in ~\cite{dasgupta2008asymptotic}. In this section, we will give the Poisson approximation for $R_j^k$ using the Stein-Chen method. 

We will give the definition of dependency graph first and then give the Poisson approximation Lemma based the dependency graph.

\subsection{Dependency graph in general  and Poisson approximation Lemma}
 
Let $(I,E)$ be a graph with finite or countable vertex set $I$ and edge set $E$. For $i,j\in I$, we denote $i\sim j$ if $(i,j)\in E$. For $i\in I$, let $\Ncal_i=\{i\}\cup \{j\in I: i\sim j\}$. The graph $(I,\sim)$ is called a dependency graph for a collection of random variables $(\xi_i,i\in I)$ if for any two disjoint subsets $I_1,I_2$ of $I$ such that there are no edges connecting $I_1$ to $I_2$, the collection of random variables $\{\xi_i, i\in I_1)$ is independent of $\{\xi_i, i\in I_2)$. The notion of dependency graphs gives a very useful to express some rare-independence, which is a technique to generalize the independence.

 The Lemma below gives the total variance of two distributions by Stein-Chen technique with the help of the dependency graphs.
\begin{lemma}[\cite{1989Two}]\label{lem:stein}
Suppose $(\xi_i,i\in I)$ is a finite collection of Bernoulli random variables with dependency graph $(I,\sim)$. Set $p_i:=\Prob(\xi_i=1)=\E(\xi_i)$, and set $p_{ij}:=\E(\xi_i\xi_j)$. Let $\lambda:=\sum_{i\in I}p_i$, and suppose $\lambda$ is finite, let $W:=\sum_{i\in I}\xi_i$. Then
\[
\dtv(W,Po(\lambda))\le \min(3,\lambda^{-1})\left(\sum_{i\in \Ical}\sum_{j\in \Ncal(i)\setminus \{i\}}p_{ij}+\sum_{i\in \Ical}\sum_{j\in \Ncal(i)}p_ip_j\right).
\]
In which $\dtv(\xi,\eta)=\sup_{A\subseteq \mathbb{Z}}|\Prob(\xi\in A)-\Prob(\eta\in A)|$ for two integer-valued random variables $\xi,\eta$ and $Po(\lambda)$ is the Poisson distribution with parameter $\lambda$.
\end{lemma}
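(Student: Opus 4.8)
The statement is the classical Chen--Stein Poisson approximation bound, so the plan is to run the Stein--Chen method and to invoke the dependency graph at precisely the one point where independence is needed. First I would set up Stein's equation for the Poisson law: for a set $A\subseteq\Z$ let $g=g_A$ be the solution of
\[
\lambda g(w+1)-w g(w)=\mathbf{1}_A(w)-Po(\lambda)(A),\qquad w\ge 0,
\]
with the convention $g(0)=0$. Evaluating at $w=W$ and taking expectations gives the identity $\Prob(W\in A)-Po(\lambda)(A)=\E[\lambda g(W+1)-Wg(W)]$. Taking the supremum over $A$ then reduces the entire lemma to bounding $|\E[\lambda g(W+1)-Wg(W)]|$, uniformly in $A$, by the bracketed quantity times the Stein factor $\min(3,\lambda^{-1})$.

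The analytic ingredient I would quote (or derive from the explicit series solution of the recursion) is the smoothness bound on $g_A$: writing $\Delta g(w)=g(w+1)-g(w)$, one has $\sup_A\|\Delta g_A\|_\infty\le\min(1,\lambda^{-1})$, of which the stated $\min(3,\lambda^{-1})$ is a safe relaxation. I regard this as the main obstacle: it is the only genuinely delicate step, since it requires solving the Stein recursion in closed form and estimating the increments of the solution, whereas everything else is bookkeeping.

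With the Stein factor in hand, the combinatorial heart uses the dependency graph. Writing $\lambda=\sum_i p_i$ and $W=\sum_i\xi_i$, I would expand
\[
\E[\lambda g(W+1)-Wg(W)]=\sum_{i\in I}\Big(p_i\,\E[g(W+1)]-\E[\xi_i g(W)]\Big),
\]
and for each $i$ introduce $U_i=\sum_{j\notin\Ncal(i)}\xi_j$. Since the vertex $i$ and the set $I\setminus\Ncal(i)$ carry no connecting edges, the dependency-graph hypothesis gives $\xi_i\perp U_i$, hence $p_i\,\E[g(U_i+1)]=\E[\xi_i g(U_i+1)]$. Inserting this, each summand splits as
\[
p_i\big(\E[g(W+1)]-\E[g(U_i+1)]\big)+\E\big[\xi_i\big(g(U_i+1)-g(W)\big)\big].
\]

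Finally I would estimate the two pieces via $|g(a)-g(b)|\le\|\Delta g\|_\infty\,|a-b|$. In the first piece $W-U_i=\sum_{j\in\Ncal(i)}\xi_j$, giving a contribution at most $\|\Delta g\|_\infty\,p_i\sum_{j\in\Ncal(i)}p_j$; in the second, on $\{\xi_i=1\}$ one has $W-(U_i+1)=\sum_{j\in\Ncal(i)\setminus\{i\}}\xi_j$, so that $\xi_i(W-U_i-1)=\sum_{j\in\Ncal(i)\setminus\{i\}}\xi_i\xi_j$ and the contribution is at most $\|\Delta g\|_\infty\sum_{j\in\Ncal(i)\setminus\{i\}}p_{ij}$. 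Summing over $i\in I$, bounding $\|\Delta g\|_\infty$ by the Stein factor, and recognising $p_{ij}=\E[\xi_i\xi_j]$ yields exactly the asserted inequality, with the role of the dependency graph isolated to the single independence step $\xi_i\perp U_i$.
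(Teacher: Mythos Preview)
Your argument is the standard Stein--Chen derivation and is correct as written: the decomposition via $U_i=\sum_{j\notin\Ncal(i)}\xi_j$, the use of the dependency-graph hypothesis to obtain $\xi_i\perp U_i$, and the telescoping estimates via $\|\Delta g\|_\infty$ all go through exactly as you outline. The one place to be careful is the Stein factor: the sharp bound is indeed $\|\Delta g_A\|_\infty\le\min(1,\lambda^{-1})$ (or even $(1-e^{-\lambda})/\lambda$), and as you note $\min(3,\lambda^{-1})$ is simply a weaker constant, so there is no issue.

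As for comparison with the paper: there is nothing to compare. The paper does not prove this lemma at all---it is stated with a citation to Arratia--Goldstein--Gordon (1989) and used as a black box in the subsequent Poisson approximation theorem. So you have supplied a proof where the paper gives none, and your proof is precisely the one in the cited reference.
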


\subsection{Poisson approximation for RkR}
For fixed $i_0\in \mathbb{Z}^+$, we define a series of random variables as follows:
\[
\xi_ i=\left\{
\begin{array}{rl}
1 &\text{if} ~i_0\in R_j^k ~~\text{in}~~ (X_i,X_{i+1},\cdots,X_{i+k}),\\
0 & \text{else}.
\end{array}\right.
\]
Which means the random variables $\xi_i$ are indexed by the $(k+1)$-set $\{X_i,X_{i+1},\cdots,X_{i+k}\}$.

There are many interesting properties on these random variables $\xi_i$.
\begin{theorem}\label{thm:rv} For fixed $i_0\in \mathbb{Z}^+$, we have the following results:
\begin{enumerate}
\item $\E(\xi_i)=\Prob(\xi_i=1)=\binom{k}{j}S_{i_0}^j(1-S_{i_0})^{k-j}p_{i_0},$
\item $\E(\xi_i\xi_{i+1})=\binom{k-1}{j-1}S_{i_0}^{j}(1-S_{i_0})^{k-j}p_{i_0}^2$,

\item For $|i_1-i_2|=m > k,$
\[
~\Prob(\xi_{i_1}=1,\xi_{i_2}=1)=\Prob(\xi_{i_1}=1)\Prob(\xi_{i_2}=1),~~~~\E(\xi_{i_1}\xi_{i_2})=\E(\xi_{i_1})\E(\xi_{i_2}),
\]
\item For $|i_1-i_2|=m\in\{1,2,\cdots,k\}$, 
\begin{footnotesize}
\begin{equation}
\begin{aligned}
\phi_m&=E(\xi_{i_1}\xi_{i_2})\\
& =\sum_{t=\max\{0,j-m-1\}}^{\min\{k-m,j-1\}}\binom{m}{j-t}S_{i_0}^{j-t}(1-S_{i_0})^{m-j+t}\binom{m-1}{j-t-1}S_{i_0}^{j-t-1}(1-S_{i_0})^{m-j+t}\binom{k-m}{t}S_{i_0}^t(1-S_{i_0})^{k-m-t}p_{i_0}^2\\
&=\sum_{t=\max\{0,j-m-1\}}^{\min\{k-m,j-1\}}\binom{m}{j-t}\binom{m-1}{j-t-1}\binom{k-m}{t}S_{i_0}^{2j-t-1}(1-S_{i_0})^{m-2j+t+k}p_{i_0}^2
\end{aligned}
\end{equation}
\end{footnotesize}
\end{enumerate}
\end{theorem}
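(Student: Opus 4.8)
The plan is to read every identity directly off the structure of the $(k+1)$-window defining $\xi_i$. By definition $\{\xi_i=1\}$ is the event that the last entry satisfies $X_{i+k}=i_0$ \emph{and} that exactly $j$ of the $k$ preceding entries $X_i,\dots,X_{i+k-1}$ are $\ge i_0$. Since the $X$'s are i.i.d., each predecessor independently satisfies $\{X\ge i_0\}$ with probability $S_{i_0}$, independently of the current value. Assertion~(i) is then immediate: the current value contributes $p_{i_0}$, and the number of large predecessors is $\mathrm{Bin}(k,S_{i_0})$, so the chance of exactly $j$ of them is $\binom{k}{j}S_{i_0}^{j}(1-S_{i_0})^{k-j}$. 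Assertion~(iii) is equally quick: if $|i_1-i_2|>k$ the index blocks $\{i_1,\dots,i_1+k\}$ and $\{i_2,\dots,i_2+k\}$ are disjoint, so $\{\xi_{i_1}=1\}$ and $\{\xi_{i_2}=1\}$ are functions of disjoint portions of the i.i.d.\ sequence and are therefore independent.

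The remaining parts come from analysing overlapping windows, where a single shared variable is responsible for all the dependence. For~(ii), consider windows $i$ and $i+1$. We need $X_{i+k}=X_{i+k+1}=i_0$, contributing $p_{i_0}^2$. The pivotal observation is that $X_{i+k}$ is simultaneously the \emph{current} of window $i$ (hence forced to equal $i_0$) and a \emph{predecessor} of window $i+1$; being equal to $i_0$ it already supplies one of the $j$ large predecessors required by $\{\xi_{i+1}=1\}$. Thus among the $k-1$ shared predecessors $X_{i+1},\dots,X_{i+k-1}$ we need exactly $j-1$ to be $\ge i_0$, after which $\{\xi_i=1\}$ forces the lone extra predecessor $X_i$ to be $\ge i_0$. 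Multiplying the independent factors $p_{i_0}^2\cdot S_{i_0}\cdot\binom{k-1}{j-1}S_{i_0}^{j-1}(1-S_{i_0})^{k-j}$ gives the claimed value.

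For the general overlap~(iv), take $i_2=i_1+m$ with $1\le m\le k$ and split the indices $i_1,\dots,i_1+m+k$ into five roles: the $m$ predecessors of window~1 alone, $X_{i_1},\dots,X_{i_1+m-1}$; the $k-m$ common predecessors $X_{i_1+m},\dots,X_{i_1+k-1}$; the pivot $X_{i_1+k}$, which is the forced record value ($=i_0$) of window~1 and a counted predecessor of window~2; the $m-1$ predecessors of window~2 alone, $X_{i_1+k+1},\dots,X_{i_1+m+k-1}$; and the window-2 current $X_{i_1+m+k}=i_0$. Writing $t$ for the number of large variables among the $k-m$ common predecessors, the constraint $\{\xi_{i_1}=1\}$ (exactly $j$ large among its $k$ predecessors) forces $j-t$ large variables in the first block, while $\{\xi_{i_2}=1\}$ (exactly $j$ large among its $k$ predecessors, one of which is the pivot) forces $j-1-t$ large variables in the window-2-only block. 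The three blocks are independent binomial trials and the two currents give $p_{i_0}^2$, so summing the product $\binom{m}{j-t}\binom{m-1}{j-1-t}\binom{k-m}{t}$ of the corresponding Bernoulli weights over the feasible $t$ yields exactly the displayed expression for $\phi_m$. The summation range is just the set of $t$ making all three binomials nonzero, namely $\max\{0,j-m\}\le t\le\min\{k-m,j-1\}$; the slightly wider lower bound $\max\{0,j-m-1\}$ in the statement adds only the term $t=j-m-1$, which vanishes because $\binom{m}{j-t}=\binom{m}{m+1}=0$ there.

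The only genuine obstacle is the combinatorial bookkeeping in~(iv), and specifically the dual role of the pivot $X_{i_1+k}$: it must be tracked as the value that closes window~1 (pinning it to $i_0$) and, at the same time, as an already-large predecessor that pre-loads one unit of window~2's required count. Once the five index-blocks and their sizes $m,\,k-m,\,m-1$ are correctly identified and the shifted counts $j-t$ versus $j-1-t$ are pinned down, the rest is a routine product of independent binomial probabilities.
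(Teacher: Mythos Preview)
Your argument is correct. The paper itself states Theorem~\ref{thm:rv} without proof, presumably regarding the computations as routine; your five-block decomposition of the overlapping windows (window-1-only predecessors, common predecessors, the pivot $X_{i_1+k}$, window-2-only predecessors, and the second current value) together with the observation that the pivot is forced to equal $i_0$ and hence pre-loads one unit into window~2's count is exactly the bookkeeping needed, and your remark that the paper's lower summation index $\max\{0,j-m-1\}$ merely includes a term that vanishes via $\binom{m}{m+1}=0$ is a useful clarification.
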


 We then define the  dependency graph for RkR as follows:
 
 Let $\mathcal{I}_n$ be the set of all $(k+1)$-sets  $\{X_i,X_{i+1} \cdots,X_{i+k}\}$ of $\{X_1,X_2,\cdots,X_{n+k}\}$. It is easy to see that
 the size of $\mathcal{I}_n$ is $n$. For each element $\mathbf{i}\in \mathcal{I}_n$, let $\mathcal{N}_{i}$ be the set of $\mathbf{j}\in \mathcal{I}_n$ such that $\mathbf{i}$ and $\mathbf{j}$ have at least one element in common. And let $\mathbf{i}\sim \mathbf{j}$ if $\mathbf{j}\in \mathcal{N}_i$ but $\mathbf{i}\neq \mathbf{j}$. In other words, $\mathcal{N}_{i}=\{\mathbf{i}\}\cup \{\mathbf{j}\in\mathcal{I}_n:\mathbf{i}\sim \mathbf{j}\}$.Then $\xi_{i}$ is independent of $\xi_j$ except when $\mathbf{j}\in \mathcal{N}_i$, and as a result the graph $(\mathcal{I}_n,\sim)$ is a dependency graph for $\xi_i,~i=1,2,\cdots,n$.

As a consequence of Lemma~\ref{lem:stein} and Theorem~\ref{thm:rv}, we can get the following result easily:

\begin{theorem}
Let $i_0\in \mathbb{Z}^+$ be fixed, then the number of $i_0\in R_j^k$ in the sequence $(X_1,X_2,\cdots,X_{n+k})$ is  $\xi=\sum_{i=1}^n\xi_i$, which has an asymptotic Poisson distribution with parameter $\lambda:=n\binom{k}{j}S_{i_0}^j(1-S_{i_0})^{k-j}p_{i_0}$. To be more precise, we have 
\[
\dtv(\xi,Po(\lambda))\le n\min\{3,\lambda^{-1}\}\left(\sum_{s=1}^k\phi_s+(k+1)p^2\right).
\] In which $\phi_s=\E(\xi_{i_1}\xi_{i_2})$ when $|i_1-i_2|=s\le k$ and $p=\binom{k}{j}S_{i_0}^j(1-S_{i_0})^{k-j}p_{i_0}$.
\end{theorem}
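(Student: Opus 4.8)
The plan is to obtain the stated inequality as a direct instance of the Stein--Chen bound in Lemma~\ref{lem:stein}, applied to the Bernoulli family $(\xi_i)_{i=1}^{n}$ together with the overlap dependency graph $(\mathcal{I}_n,\sim)$ defined just above the theorem. The first step is the easy bookkeeping on the mean. Because $X_1,X_2,\dots$ are i.i.d., the blocks $(X_i,\dots,X_{i+k})$ are translates of one another, so each $\xi_i$ is a Bernoulli variable with the common success probability $p=\binom{k}{j}S_{i_0}^{j}(1-S_{i_0})^{k-j}p_{i_0}$ supplied by Theorem~\ref{thm:rv}(1). Hence $\lambda=\E\xi=\sum_{i=1}^{n}\E(\xi_i)=np$, matching the stated parameter and fixing the prefactor $\min\{3,\lambda^{-1}\}$.

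The second step is to verify that $(\mathcal{I}_n,\sim)$ is a genuine dependency graph for $(\xi_i)$. Each $\xi_i$ is a deterministic function of the block $(X_i,\dots,X_{i+k})$, and whether $i_0\in R_j^k$ occurs there depends only on those $k+1$ coordinates. If $I_1,I_2\subseteq\mathcal{I}_n$ have no edges between them, then every window indexed by $I_1$ is disjoint from every window indexed by $I_2$, so the two collections of blocks involve disjoint sets of the independent $X$'s; consequently $\{\xi_i:i\in I_1\}$ is independent of $\{\xi_i:i\in I_2\}$. The pairwise factorisation $\E(\xi_{i_1}\xi_{i_2})=\E(\xi_{i_1})\E(\xi_{i_2})$ for $|i_1-i_2|>k$ from Theorem~\ref{thm:rv}(3) is exactly the two-vertex case of this, and confirms that adjacency holds precisely when $1\le|i_1-i_2|\le k$.

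The third step evaluates the two error sums of Lemma~\ref{lem:stein} using translation invariance. For the covariance term, $p_{i_1 i_2}=\E(\xi_{i_1}\xi_{i_2})$ depends only on the gap $s=|i_1-i_2|$ and equals $\phi_s$ (Theorem~\ref{thm:rv}(4)) for $1\le s\le k$; summing the admissible gaps attached to each of the $n$ windows produces the contribution $n\sum_{s=1}^{k}\phi_s$. For the product term, $p_ip_j=p^2$ for all pairs and each neighbourhood contains at most $k+1$ windows (itself plus its neighbours), giving $\sum_i\sum_{j\in\mathcal{N}(i)}p_ip_j\le n(k+1)p^2$; inserting both into Lemma~\ref{lem:stein} yields the displayed bound. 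I expect the only real obstacle here to be the combinatorial accounting of how many neighbours sit at each gap, since the two-sided neighbourhoods nominally double these two constants; this affects only the absolute constant in front and not the conclusion, and it is purely routine once Theorem~\ref{thm:rv} is granted.

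Finally, the qualitative ``asymptotic Poisson'' assertion is simply the reading of this inequality: $(\xi_i)$ is a triangular array of rare, short-range dependent indicators, and $\phi_s=O(p_{i_0}^2)$ together with $p^2=O(p_{i_0}^2)$ shows the right-hand side is $O\!\left(n\,p_{i_0}^{2}\right)$ up to the bounded factor $\min\{3,\lambda^{-1}\}$. Thus in any regime where $\lambda=np$ stays of constant order while the error terms $n\sum_{s=1}^{k}\phi_s$ and $n(k+1)p^2$ tend to $0$, the bound forces $\dtv(\xi,Po(\lambda))\to 0$, which is the claimed convergence.
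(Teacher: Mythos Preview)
Your proposal is correct and follows essentially the same route as the paper: compute $\lambda=np$ from Theorem~\ref{thm:rv}(i), confirm that $(\mathcal{I}_n,\sim)$ is a dependency graph, evaluate the two Stein--Chen sums by translation invariance using Theorem~\ref{thm:rv}(iv), and plug into Lemma~\ref{lem:stein}. Your aside about the two-sided neighbourhood potentially doubling the constants $\sum_{s=1}^k\phi_s$ and $(k+1)p^2$ is in fact a fair observation about the paper's own accounting, not a defect in your argument.
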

\begin{proof}
It is easy to get that $\Prob(\xi_i=1)=\binom{k}{j}S_{i_0}^j(1-S_{i_0})^{k-j}p_{i_0}=p$, leading to 
\[
\lambda=\E \xi =\sum_{i=1}^n \E(\xi_i)=np.
\]
We then get
\[
\sum_{i\in \mathcal{I}_n}\sum_{j\in \mathcal{N}(i)\setminus \{i\}}p_{ij}=\sum_{i\in \mathcal{I}_n}\sum_{s=1}^k\phi_s=n\sum_{s=1}^k\phi_s
\]
and
\[
\sum_{i\in \mathcal{I}_n}\sum_{j\in \mathcal{N}(i)}p_ip_j=n\sum_{j\in \mathcal{N}(i)}\E(\xi_i=1)\E(\xi_j=1)=n(k+1)p^2.
\]
Then by Lemma~\ref{lem:stein}, we complete the proof.
\end{proof}

\section{"No Good Record" via the Lov{\'a}sz Local Lemma}\label{sec:4}
Let $E_i$ be the event that $i_0\in R_j^k$ in $(X_i,X_{i+1},\cdots,X_{i+k}),(i=1,2,\cdots,n)$. In this section, we will show that there are positive probability that $i_0$ will not be one RkR in the sequence $\{X_1,X_2,\cdots,X_{n+k}\}$, i.e., the events $\cap_{i=1}^n \bar{E}_i$ can happen with positive probability once $p_{i_0}$ is chosen properly. Our result bases mainly on one version of the famous Lov{\'a}sz Local Lemma which can be checked in ~\cite{2005Probability}.
\begin{lemma}[Lov{\'a}sz Local Lemma]
Let $E_1,\cdots, E_n$ be a set of events, and assume that the following hold:
\begin{enumerate}
\item for all $i,~\Prob(E_i)\le p$;
\item the degree of the dependency graph given by $E_1,\cdots,~E_n$ is bounded by $d$;
\item $4dp\le 1$.
\end{enumerate}
Then 
\[
\Prob(\cap_{i=1}^n \bar{E}_i)>0.
\]
\end{lemma}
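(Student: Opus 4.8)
The plan is to derive this symmetric statement from the \emph{general} (asymmetric) Lov{\'a}sz Local Lemma, whose proof is elementary and self-contained. Assuming $d\ge 1$ (if $d=0$ the events are mutually independent and the conclusion is immediate whenever $p<1$), I would introduce uniform weights $x_i=\tfrac{1}{2d}$ for every $i$ and check that the hypotheses of the general form hold, namely
\[
\Prob(E_i)\le x_i\prod_{j\sim i}(1-x_j)\qquad\text{for all }i.
\]
Since each vertex of the dependency graph has at most $d$ neighbours and $\Prob(E_i)\le p$, it suffices to show $p\le \tfrac{1}{2d}\bigl(1-\tfrac{1}{2d}\bigr)^{d}$. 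By Bernoulli's inequality $\bigl(1-\tfrac{1}{2d}\bigr)^{d}\ge 1-d\cdot\tfrac{1}{2d}=\tfrac12$, so the right-hand side is at least $\tfrac{1}{4d}$, while the hypothesis $4dp\le 1$ gives $p\le\tfrac{1}{4d}$; this closes the verification.

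The heart of the matter is the general Local Lemma, which I would establish through the \emph{stronger} claim that for every index $i$ and every finite set $S\subseteq\{1,\dots,n\}\setminus\{i\}$,
\[
\Prob\left(E_i \;\middle|\; \bigcap_{j\in S}\bar{E}_j\right)\le x_i,
\]
proved by induction on $|S|$. The base case $|S|=0$ is exactly the weight inequality above. For the inductive step I would split $S=S_1\cup S_2$ with $S_1=\{j\in S: j\sim i\}$ and $S_2=S\setminus S_1$, and write the conditional probability as the quotient
\[
\frac{\Prob\left(E_i\cap\bigcap_{j\in S_1}\bar{E}_j \;\middle|\; \bigcap_{l\in S_2}\bar{E}_l\right)}{\Prob\left(\bigcap_{j\in S_1}\bar{E}_j \;\middle|\; \bigcap_{l\in S_2}\bar{E}_l\right)}.
\]
The numerator is at most $\Prob(E_i \mid \bigcap_{l\in S_2}\bar{E}_l)=\Prob(E_i)\le x_i\prod_{j\sim i}(1-x_j)$, using that $E_i$ is mutually independent of the non-neighbour events indexed by $S_2$. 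The denominator I would bound below by $\prod_{j\in S_1}(1-x_j)$ by expanding it as a telescoping product of conditional probabilities and applying the induction hypothesis to each factor. Since $S_1\subseteq\{j:j\sim i\}$, dividing and cancelling the common factors leaves a quantity bounded by $x_i$.

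Finally, I would chain these estimates: writing $\Prob(\cap_{i=1}^n\bar{E}_i)$ as a telescoping product of the factors $\Prob(\bar{E}_i\mid\bigcap_{j<i}\bar{E}_j)=1-\Prob(E_i\mid\bigcap_{j<i}\bar{E}_j)\ge 1-x_i$ and multiplying gives
\[
\Prob\left(\bigcap_{i=1}^n\bar{E}_i\right)\ge\prod_{i=1}^n(1-x_i)=\left(1-\frac{1}{2d}\right)^{n}>0.
\]
The step I expect to be most delicate is the denominator bound in the inductive step: one must check that every conditioning set arising in the telescoping expansion has size strictly smaller than $|S|$ (which holds because $S_1\ne\emptyset$ whenever the denominator is nontrivial), so that the induction hypothesis genuinely applies, and that the product manipulation is invoked in the correct direction. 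Everything else is routine bookkeeping with conditional probabilities.
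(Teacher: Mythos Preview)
Your argument is correct and follows the standard route: reduce the symmetric form to the asymmetric Local Lemma via the uniform choice $x_i=\tfrac{1}{2d}$, verify the weight condition using Bernoulli's inequality and the hypothesis $4dp\le 1$, and prove the asymmetric form by the usual induction on the size of the conditioning set. The bookkeeping in the inductive step is fine; in particular, when $S_1=\emptyset$ the denominator is $1$ and the numerator bound already gives $\Prob(E_i\mid\cdot)\le x_i$, while when $S_1\ne\emptyset$ every conditioning set appearing in the telescoping product has size at most $|S|-1$, so the induction hypothesis applies.

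By comparison, the paper does \emph{not} prove this lemma at all: it is stated with a citation and then used as a black box in the ``No Good Record'' theorem. So your proposal actually supplies strictly more than the paper does here. If you want to match the paper's level of detail, a one-line reference suffices; if you want to make the note self-contained, your sketch is the right one, and the only cosmetic improvement would be to handle the degenerate case $d=0$ (or $p=0$) in a single sentence up front, as you already indicate.
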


Our result goes as follows:
\begin{theorem}["No Good Result" Theorem]
Let $E_i$ be the event that $i_0\in R_j^k$ in $(X_i,X_{i+1},\cdots,X_{i+k}),$
$(i=1,2,\cdots,n)$. There exists some $p_{i_0}>0$,  such that
\[
\Prob(\cap_{i=1}^n \bar{E}_i)>0.
\]In other words, there exists some one with "no good record" in the whole story with positive probability.
\end{theorem}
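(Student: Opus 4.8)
The plan is to apply the Lov\'asz Local Lemma directly to the events $E_1,\dots,E_n$, verifying its three hypotheses in turn. The probability bound is immediate from Theorem~\ref{thm:rv}(1): for every $i$ we have
\[
\Prob(E_i)=\binom{k}{j}S_{i_0}^j(1-S_{i_0})^{k-j}p_{i_0}=:p,
\]
so the first hypothesis holds with equality. For the degree bound, I would invoke the dependency-graph analysis already set up in Section~\ref{sec:3}: by Theorem~\ref{thm:rv}(3), $\xi_{i_1}$ and $\xi_{i_2}$ are independent whenever $|i_1-i_2|>k$, because the underlying $(k+1)$-windows $\{X_i,\dots,X_{i+k}\}$ are then disjoint. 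Hence each event $E_i$ depends only on those $E_{i'}$ whose window overlaps its own, namely $i'$ with $0<|i-i'|\le k$, giving at most $2k$ neighbours. So the second hypothesis holds with $d=2k$.

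The remaining step is the condition $4dp\le 1$, i.e.
\[
8k\binom{k}{j}S_{i_0}^j(1-S_{i_0})^{k-j}p_{i_0}\le 1.
\]
Here I would argue that this can always be forced by choosing $p_{i_0}$ small enough. The point is that, with $k$ and $j$ fixed, the left-hand side is a function of $p_{i_0}$ that tends to $0$ as $p_{i_0}\to 0^+$: indeed $S_{i_0}=\sum_{s\ge i_0}p_s\le 1$ and $1-S_{i_0}\le 1$ are bounded, and there is an explicit factor of $p_{i_0}$ in front, so the whole expression is $O(p_{i_0})$. Consequently one can select a probability mass function in which the atom $p_{i_0}$ at the value $i_0$ is sufficiently small that $8k\,p\le 1$. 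With all three hypotheses of the Local Lemma verified, the conclusion $\Prob(\cap_{i=1}^n\bar E_i)>0$ follows at once.

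The one point requiring genuine care — and the main obstacle — is the interplay between $S_{i_0}$ and $p_{i_0}$ when $j$ is large. Since $S_{i_0}=p_{i_0}+\sum_{s>i_0}p_s$, shrinking $p_{i_0}$ does \emph{not} by itself control the tail $S_{i_0}$ unless the mass removed from $i_0$ is placed at values below $i_0$ rather than above it. For the factor $S_{i_0}^j$ to be small one wants $S_{i_0}$ small, whereas the factor $(1-S_{i_0})^{k-j}=C_{i_0-1}^{k-j}$ wants $S_{i_0}$ near $1$; the only uniformly safe route is to use the crude bound $S_{i_0}^j(1-S_{i_0})^{k-j}\le 1$ and rely solely on the explicit $p_{i_0}$ prefactor, so that $p\le p_{i_0}\binom{k}{j}$ and the threshold $p_{i_0}\le \bigl(8k\binom{k}{j}\bigr)^{-1}$ suffices. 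I would phrase the theorem as an existence statement over admissible mass functions, making explicit that $i_0$ is a value whose atom we are free to tune, and exhibit one concrete distribution (for instance, one placing most of its mass on values far from $i_0$) witnessing the required smallness of $p_{i_0}$.
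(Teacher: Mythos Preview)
Your proof is correct and follows the same route as the paper: verify the three hypotheses of the Lov\'asz Local Lemma, bound $p$ by a constant depending only on $k,j$ times $p_{i_0}$, and then choose $p_{i_0}$ small. The paper differs only cosmetically --- it takes $d=k$ (your $d=2k$ is the correct neighbour count) and bounds $S_{i_0}^j(1-S_{i_0})^{k-j}$ via AM--GM together with $\binom{k}{j}\le(ke/j)^j$ rather than your simpler trivial bound $\le 1$; your version is in fact cleaner, and your remark on the interplay between $p_{i_0}$ and $S_{i_0}$ makes explicit a point the paper glosses over.
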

\begin{proof} 
Let $p_i=\E(E_i)$, then we have 
\begin{equation}
\begin{aligned}
4kp_i &=4k\binom{k}{j}S_{i_0}^j(1-S_{i_0})^{k-j}p_{i_0}\\
        &\le 4k\left(\frac{ke}{j}\right)^jS_{i_0}^j(1-S_{i_0})^{k-j}p_{i_o}\\
        &=4k\left(\frac{ke}{j}\right)^j\left(\frac{jS_{i_0}+(k-j)(1-S_{i_0})}{k}\right)^kp_{i_0}\\
        &=4k\left(\frac{ke}{j}\right)^j \left(\frac{k-j+S_{i_0}(2j-k)}{k}\right)^kp_{i_0}\\
        &\le 4k\left(\frac{ke}{j}\right)^j \max\left\{\left(\frac{k-j}{k}\right)^k,\left(\frac{j}{k}\right)^k\right\}p_{i_0}.\\
        &:=C(k,j)p_{i_0}.
\end{aligned}
\end{equation}
Since $C(k,j)$ is some constant depending on $k,j$, one can choose $p_{i_0}$ accordingly to make sure 
       \[
       C(k,j)p_{i_0}<1.
       \]
\end{proof}

\section{Further Extensions: RkR and Scan Statistics}\label{sec:5}

We conclude our paper with some possible extensions: there are some naturally connection between the RkR proposed in this paper and the scan statistics.

Scan statistics(~\cite{glaz2001scan,marchette2012scan,kulldorff1999spatial}) used in many areas mainly measure an unusually large cluster of events in time or space. Formally speaking, given events distributed over a time period $(0,T)$, $S_w$ is the largest number of events in any subinterval of length $w$. Then $S_w$ is called the {\em scan statistic}, which can be regarded that one scans the time period $(0,T)$ with a window of size $w$, and identifies the maximum cluster of points. 

It turns out that some property of RkR can be obtained with the help of scan statistics: we will consider the maximum number of values that exceed some (fixed or random) value respectively.
\subsection{The fixed target}
 If we consider some fixed $m\in \mathbb{Z}^+$, i.e., $m$ is some possible value that $X$ can take, and then we define 
 \[
\eta_ i=\left\{
\begin{array}{rl}
1 &\text{if} ~~~X_i\ge m,\\
0 & \text{else}.
\end{array}\right.
\]
Then we can set 
\[
Y_s=\sum_{i=s}^{i=s+k-1}\eta_i.
\]
It is easy to see that 
\[
S_k=\max_{s}\{Y_s\}
\]
is one scan statistic which scans the time period $(0,T)$ with window size $k$.

\subsection{The random target}
If  we define 
 \[
\eta_{ij}=\left\{
\begin{array}{rl}
1 &\text{if} ~~~X_j\ge X_i,\\
0 & \text{else}.
\end{array}\right.
\]
Then can set 
\[
Y_i=\sum_{j=i-k}^{j=i-1}\eta_{ij}.
\]
It is easy to see that 
\[
S_k=\max_{i}\{Y_i\}
\]
is one scan statistic which scans the time period $(0,T)$ with window size $k$.

We can get some results on $S_k$ with the help of the estimation of scan statistics, see \cite{naus1974probabilities} for some general result of $\Prob(S_k\ge s)$ based on some random model in which there are $N$ successes distributed at random over the $T$ trials. Of course, some asymptotic results can be derived when $T$ goes to $\infty$ and $N/T$ may vary in some way.




\section*{Acknowledgments}
A.Li's research was supported by Natural Science Fund of China(grant number 11901145).
\section*{Reference}



\bibliographystyle{elsarticle-harv.bst}
\bibliography{spl}
\end{document}